%% file: article_GPretsel.tex
\documentclass[12pt]{amsart}
\usepackage{tikz}
\usepackage{amsmath,amssymb}
\usepackage{amsopn}
\usepackage{mathrsfs}
\usepackage{bm}
\usepackage{url}
\usepackage{listings,jvlisting}
\usepackage{tcolorbox}
\usepackage{ascmac}
\usepackage{amsthm}
\usepackage{ulem}
\usepackage{graphics}
\usepackage[export]{adjustbox}
\usepackage{tikz}
\usepackage{subcaption}
\usepackage[margin=25truemm,bottom=30truemm,top=30truemm]{geometry}
\usepackage{comment}
\usepackage{hyperref}
\usepackage{xcolor}
\usepackage{mathdots}
\usepackage{tikz-3dplot}
\usepackage[title]{appendix}

\usetikzlibrary{knots}
\usetikzlibrary{braids}
\usetikzlibrary{decorations.markings}
\usetikzlibrary{hobby}
\usetikzlibrary{arrows.meta}
\usetikzlibrary{positioning}
\usetikzlibrary{calc}

\theoremstyle{definition}
\newtheorem{definition}{Definition}[section]

\theoremstyle{plain}
\newtheorem{theorem}[definition]{Theorem}

\newtheorem{proposition}[definition]{Proposition}
\newtheorem{example}[definition]{Example}
\newtheorem{lemma}[definition]{Lemma}

\theoremstyle{remark}
\newtheorem{remark}[definition]{Remark}

\numberwithin{equation}{section}

\newcommand{\RR}{\mathbb{R}}
\newcommand{\ZZ}{\mathbb{Z}}

\newcommand{\Db}{\overline{D}}

\newcommand{\ie}{\emph{i.e.}\@ifnextchar.{\!\@gobble}{}}
\newcommand{\eg}{\emph{e.g.}\@ifnextchar.{\!\@gobble}{}}
\newcommand{\etc}{etc\@ifnextchar.{}{.\@}}

\newcommand{\la}{\left\langle}
\newcommand{\ra}{\right\rangle}

\input{knotdiagrams.tex}

\allowdisplaybreaks[1]

\title{A Generalization of Pretzel Links via Spatial Graphs}
\author{Kotaro Shoji}
\date{}
\address{Graduate School of Science and Engineering, Ehime University, 2-5 Bunkyo-cho, Matsuyama, Ehime 790-8577, Japan}
\email{m807023a@mails.cc.ehime-u.ac.jp}

\begin{document}

\begin{abstract}
In this paper, we introduce \textit{graph-pretzel links},
a generalization of classical pretzel links based on spatial graph projections. 
As our main result, 
we investigate a subfamily associated with the complete graph on four vertices to construct an infinite family of distinct ribbon knots. 
Furthermore, although they all share a trivial Alexander polynomial, 
they can be distinguished from one another by their Jones polynomials.
\end{abstract}

\maketitle

\section{Introduction}

In knot theory, classes of links such as torus links and pretzel links have long played important roles as test classes for evaluating the strength of invariants.
Torus links are obtained by simply twisting and connecting multiple strands, while pretzel links are formed by connecting multiple 2-strand twists side by side. 
Generalizing these constructions to twists of three or more strands connected in a pretzel-like fashion seems to be a highly natural extension, 
yet it has not been extensively studied. Motivated by this observation, we introduce a new class of links that naturally generalizes both torus and pretzel links.

Let $\Gamma$ be a projection of a spatial graph with vertices $v_1, v_2, \dots, v_i$. 
We construct a link by taking $\Gamma$ and its mirror image, truncating them at their vertices, 
and connecting the corresponding endpoints at $v_j$ with a certain number of twists specified by an integer $n_j$. 
We denote the resulting link by $P(\Gamma, \{n_1, \dots, n_i\})$ and call it a \textbf{graph-pretzel link}. 
The rigorous definition of this construction is given in Section 2 (Definition \ref{maindef}).

To investigate the properties of this newly constructed class, 
we first focus on polynomial invariants. 
Polynomial invariants are relatively easy to compute while possessing strong distinguishing power. 
However, they are not complete invariants, and pairs or infinite families of distinct knots sharing the same polynomial invariants have been widely studied for a long time. 
For example, mutant knots share the same HOMFLYPT and Kauffman polynomials. 
Birman \cite{MR799267} provided pairs of knots obtained as closures of 3-braids that share the same Alexander and Jones polynomials. 
As for infinite families, Kauffman and Lopes \cite{MR3687484} constructed infinite families of distinct knots with the same Alexander polynomial (distinguishable by the Jones polynomial) using pretzel knots, 
and Kanenobu \cite{MR831406,kanenobutwo} constructed infinite families sharing the same HOMFLYPT polynomial. 
Using the graph-pretzel links defined in this paper, 
we obtain the following infinite family of knots.

\begin{theorem}\label{maintheorem}
    Let $K_{n}:=P(\adjustbox{valign=c}{\scalebox{0.4}{\tetragraphwithv}},\{-2,2,-(3n+1),3\})$.
    Then, we have
    \begin{equation*}
        \Delta_{K_{n}}(t) = 1.
    \end{equation*}
    Furthermore, $J_{K_n}(q)\neq J_{K_m}(q)$ if $n\neq m$. 
    In particular, $K_n$ is not isotopic to $K_m$. 
\end{theorem}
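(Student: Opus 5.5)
We work directly from the construction in Definition \ref{maindef}. The link $K_n$ is built from the $K_4$ projection and its mirror image, cut at the four vertices. Each vertex has degree three, so at each vertex $v_j$ the three strands from $\Gamma$ are joined to the three strands from the mirror by a $3$-strand twist region. That region carries $n_j$ twists, taken as a power of the full or half twist on three strands. Only the fourth region has $n$-dependent twisting, namely $-(3n+1)$.

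The plan is to treat the three other twist regions and the two copies of the graph as a fixed tangle. Then $K_n$ is the closure of a fixed $3$-strand tangle $T$ with the braid $\sigma^{-(3n+1)}$ inserted, where $\sigma$ denotes the relevant generator of twisting (e.g.\ $\sigma_1\sigma_2$, whose cube is the full twist $\Delta^2$). Writing $-(3n+1) = -3n - 1$ isolates the dependence on $n$ as $\Delta^{-2n}$ times a fixed piece. This uses that $\Delta^2$ is central in $B_3$ and acts as a full twist.

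\textbf{Alexander polynomial.} We compute $\Delta_{K_n}$ via the reduced Burau representation, or via Fox calculus on a Wirtinger presentation adapted to the decomposition above. The full twist $\Delta^2$ acts in the reduced Burau representation of $B_3$ as the scalar $t^3$. Hence the $n$-dependence contributes only a unit factor $t^{\pm 3n}$ times a fixed matrix product. The claim then reduces to a single computation: for the fixed tangle, the resulting $2\times 2$ determinant, after normalization, equals $1$ up to units. We check this once by hand (or symbolically), taking care with the orientation conventions, and with whether the closure is a knot for all $n$. Since $3n+1 \equiv 1 \pmod 3$, the permutation is independent of $n$, so $K_n$ is a knot for every $n$.

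\textbf{Jones polynomial.} We use the Kauffman bracket, with the Temperley--Lieb algebra $TL_3$ as the state space for the twist region. In $TL_3$ the full twist acts diagonally on the two irreducible summands (the through-strand summand and the one containing cups and caps) by distinct scalars, roughly $A^{\pm 6}$-type and $A^{\pm 18}$-type monomials up to sign. Therefore $\langle K_n\rangle = \alpha\,\lambda_1^{n} + \beta\,\lambda_2^{n}$, where $\alpha,\beta$ are fixed Laurent polynomials coming from $T$ and $\lambda_1\ne\lambda_2$ are monomials. The writhe correction is linear in $n$ and so contributes another monomial factor. This yields a closed formula $J_{K_n}(q) = q^{an+b}\bigl(P(q) + q^{cn}Q(q)\bigr)$ with $c\neq 0$.

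To distinguish the $K_n$, we compare a robust feature such as the span of $J_{K_n}$, or its maximal or minimal degree. From the closed form this is an affine function of $n$ for $|n|$ large, and it is strictly monotone once we verify that $P,Q\neq 0$ and that there is no cancellation between leading terms. We handle the small values of $n$ directly, or we sharpen the argument using the evaluation $J'(1)$ or the coefficient of a specific degree.

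The main obstacle is the explicit evaluation of the fixed tangle contributions, that is, determining $\alpha$, $\beta$ and the Burau determinant. In particular we must confirm that $\beta\neq 0$, since otherwise the $n$-dependence would collapse to a monomial shift that might cancel against the writhe term. The first step is therefore to compute $K_0$ and $K_1$ explicitly and read off $\alpha$ and $\beta$ from them.
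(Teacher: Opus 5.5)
Your Jones polynomial part follows the paper's route, but the Alexander polynomial step has a genuine gap. The claim that the $n$-dependence contributes ``only a unit factor'', so that a single determinant computation suffices, is false even for honest braid closures. For $\beta\in B_3$ one has $\Delta_{\hat\beta}(t)\doteq\frac{1-t}{1-t^3}\det\bigl(I-\bar\rho(\beta)\bigr)$. Replacing $M=\bar\rho(\beta)$ by $t^{-3n}M$ turns $\det(I-M)$ into $1-t^{-3n}\operatorname{tr}M+t^{-6n}\det M$, which is not a unit multiple of $\det(I-M)$. Concretely, $\sigma_1\sigma_2$ closes to the unknot and $\Delta^2\sigma_1\sigma_2$ closes to $T(3,4)$. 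With the standard matrices, $\det(I-M)=1+t+t^2$ becomes $1+t^4+t^8$. Your reduction would therefore give $\Delta_{T(3,4)}\doteq 1$, whereas it is $t^6-t^5+t^3-t+1$.

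Beyond this, the braid framework does not fit $K_n$ at all. The complementary tangle (the two graph copies and the other three twist regions) is a $3$-strand tangle with caps and cups, not a braid. Moreover, if you trace the knot through the pretzel-like diagram with parameters $(-2,2,-(3n+1),3)$, every twist region is traversed twice in one direction and once in the other. In particular the $n$-dependent region is not an oriented $3$-braid, so neither the Burau representation of $B_3$ nor the $2\times 2$ determinant applies.

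What is needed is the oriented analogue of your $TL_3$ argument. In Conway skein theory the full twist on this mixed-orientation region has three distinct eigenvalues $1,t,t^{-1}$. This is exactly what the paper's recurrence
$\nabla_{K_n}-(3+z^2)\nabla_{K_{n-1}}+(3+z^2)\nabla_{K_{n-2}}-\nabla_{K_{n-3}}=0$ encodes. Its characteristic polynomial is $(x-1)\bigl(x^2-(t+t^{-1})x+1\bigr)$, since $2+z^2=t+t^{-1}$. Three initial values are therefore required, $\nabla_{K_0}=\nabla_{K_1}=\nabla_{K_2}=1$, and the constant solution follows because $1$ is a characteristic root.

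For the Jones part, note that the scalars by which the full twist acts on $TL_3$ are $A^{6}$ (on the Jones--Wenzl summand) and $A^{-6}$, not $A^{\pm 18}$. These produce the paper's recurrence $\la\overline{K_n}\ra-(A^6+A^{-6})\la\overline{K_{n-1}}\ra+\la\overline{K_{n-2}}\ra=0$. Once the closed form is known, no span or degree analysis is needed: $J_{K_n}-J_{K_m}=(q^{3n}-q^{3m})\,q^{2}f(q)$ with $f(q)=1-q^{-1}+q^{-3}-2q^{-4}+q^{-5}-q^{-7}+q^{-8}\neq 0$.
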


Note that $K_{n}$ is a knot (a one-component link). 
The fact that the Alexander polynomial not only fails to distinguish the knots in this family, 
but is also identically trivial ($\Delta_{K_n}(t)=1$), 
has profound significance in low-dimensional topology.
Knots with trivial Alexander polynomial are algebraically indistinguishable from the unknot and are known to be topologically slice by Freedman \cite{10.4310/jdg/1214437136}. 
On the other hand, whether they are all smoothly slice remains an intriguing question. 
For instance, Fintushel and Stern \cite{e6fd0a6e-64c3-3748-882e-9b5c2b5bf060} showed that $3$-pretzel knots with all odd parameters and trivial Alexander polynomial are not smoothly slice. 
In contrast, since every ribbon knot is smoothly slice, the family of graph-pretzel knots $K_n$ constructed in this paper is smoothly slice in a stronger sense, 
as shown in the following theorem.

\begin{theorem}\label{maintheoremtwo}
    For any positive integer $n$, $K_n$ is a ribbon knot. 
    In particular, $K_n$ is smoothly slice.
\end{theorem}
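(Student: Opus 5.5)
To prove that $K_n$ is ribbon, I will exhibit explicit ribbon moves. Concretely, I will find a band move that splits $K_n$ into a two-component trivial link. Equivalently, I will find a single saddle that produces an unlink of two components, so that $K_n$ bounds a ribbon disk consisting of two disks joined by one band.

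I will draw $K_n=P(\Gamma,\{-2,2,-(3n+1),3\})$ for $\Gamma$ the standard planar projection of the tetrahedral graph $K_4$. The construction gives four twist regions, one at each vertex, with three strands each, and the six edges of $\Gamma$ together with their mirror copies connect them. The natural place for the band is at one of the twist regions, because a saddle there changes how the strands are connected through the truncated vertex.

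First I will pair the $-2$ and $2$ regions. Along an edge joining $v_1$ and $v_2$, a band between the two parallel strands coming from $\Gamma$ and from its mirror should let the $-2$ and $+2$ twists cancel after a sequence of Reidemeister II moves. Second, I will check the component count. The saddle must produce exactly two components, and this can be verified by tracing the connectivity permutation at each vertex: a full twist on three strands is a $3$-cycle-type permutation, and $-(3n+1)\equiv 2 \pmod 3$ behaves like a single $-1$ twist up to a trivial permutation. This congruence is presumably why the parameter $3n+1$ was chosen. Third, I will show that each resulting component is unknotted and that the two are unlinked. Once the $\pm2$ twists cancel, the remaining diagram should be a graph-pretzel link on a smaller graph, essentially a theta-graph or a $3$-strand torus-like piece with twists $-(3n+1)$ and $3$. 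I expect it to reduce, by flyping or undoing full twists of the whole $3$-strand bundle, to a diagram whose components visibly bound disjoint disks.

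The main obstacle is finding the correct band and verifying that the result is a two-component unlink uniformly in $n$. I will handle the $n$-dependence as follows. The $-(3n+1)$ twist region is a $3$-strand full-twist power $\Delta^{-2n}$ times a fixed residual braid. After the band move, I expect the three strands passing through this region to become such that two of them bound a common disk, or such that the region is absorbed by a single component and unknots by isotopy, in the way a twist on strands that cap off can be rotated away. If a direct diagrammatic argument proves hard, I will first verify small cases ($n=1,2$) by explicit drawing, then identify the inductive isotopy that removes one full twist $\Delta^{2}$, namely rotating a capped-off tangle. Smooth sliceness then follows immediately, since ribbon knots are slice.
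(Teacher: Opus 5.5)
Your overall strategy (a single band splitting $K_n$ into a two-component unlink, with the $n$-dependence absorbed by the full twists at the $-(3n+1)$ vertex) is the same as the paper's. The paper exhibits an explicit band on $K_1$ whose surgery yields two trivial circles, and then observes that the extra negative full twists do not spoil this. The whole content of such a proof is the explicit band and its verification, and your proposal supplies neither. Worse, the band you do suggest fails.

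Orient $K_n$ and trace it. At the vertex with parameter $3$ the strands run straight through (trivial permutation), while at the other three vertices the permutation is a $3$-cycle. Doing the trace with, say, the parameter-$3$ vertex at the center of the planar $K_4$, you find the following. The top copy and the mirror copy of the edge joining the $-2$ and $+2$ vertices are traversed by $K_n$ in the \emph{same} direction along that edge. So the untwisted vertical band between them is incoherent, and the surgery returns a knot, not a two-component link.

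In fact a vertical band is coherent exactly on the three edges incident to the parameter-$3$ vertex. The hoped-for Reidemeister II cancellation of the $\pm 2$ twists is also unfounded. Those twist regions sit at different vertices and share only the two strands of one edge. Your band merely partially closes both tubes, leaving two nontrivial twist regions on different edges of a theta-type graph.

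Putting a half twist in the band makes it coherent, but it still cannot work uniformly in $n$. The two resulting components are the two arcs of $K_n$ between the attaching points. The trace shows that one of them passes through the $-(3n+1)$ tube once, and the other passes through it twice in the same direction.

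Your band is disjoint from that tube, so the band move commutes with the twisting. Hence the result for $K_{n+1}$ is obtained from the result for $K_n$ by a full twist along a circle $c$ around the tube. This changes the linking number of the two components by $\pm 2$, the product of their algebraic intersection numbers $2$ and $1$ with a disk bounded by $c$. So the linking number is nonzero for all but at most one $n$, and the result is not an unlink.

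What you actually need is a band (necessarily elsewhere) with two properties:
\begin{itemize}
\item one of the two components meets the disk bounded by $c$ algebraically zero times;
\item more strongly, $c$ becomes inessential in the complement of the resulting unlink, so that the $-n$ full twists are absorbed.
\end{itemize}
You must then verify by an explicit picture, as the paper does for $K_1$, that the band produces two trivial, unlinked circles.
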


\begin{remark}
    $K_{0}$ is the unknot. 
    $K_{1}$ is identified by its DT name as $K14n22185$ (by SnapPy \cite{SnapPy}).
    While we establish that $K_{1}$ is a ribbon knot with $\Delta(t)=1$, 
    it is also known to be a hyperbolic knot whose $0$-surgery yields a toroidal manifold (see \cite{abe2026complexityequal0surgeries})
    even though the genus of $K_1$ is two, rather than one.
    Therefore, $K_1$ provides a rare example combining these properties.
    This suggests that our framework of graph-pretzel links may serve as a promising search space for links with unusual geometric and topological properties.
\end{remark}

This paper is organized as follows: 
Section 2 introduces graph-pretzel links. Section 3 examines the complete graph on four vertices. 
In Section 4, we provide the proofs of the main results.

\section{Definition and Properties of the link class}
 
In this section, we give a rigorous definition of graph-pretzel links, 
investigate some of their basic properties
and show that this class naturally encompasses classical torus and pretzel links.

\begin{definition}\label{maindef}
    Let $\Gamma$ be a projection of a spatial graph, and let $\{v_1, v_2, \ldots, v_i\}$ be the set of vertices of $\Gamma$. 
    Let $r_j$ be the valence of each vertex $v_j$. 
    Let $\overline{\Gamma}$ be the mirror image of $\Gamma$, and let $\{\overline{v_1}, \overline{v_2}, \ldots, \overline{v_i}\}$ be the set of vertices of $\overline{\Gamma}$, where each $\overline{v_j}$ corresponds to $v_j$. 
    Then:

    \begin{enumerate}
        \item In $\mathbb{R}^3$, we embed $\Gamma$ as a spatial graph
        into the region $\{(x,y,z)\mid\frac{9}{10}\leq z\leq\frac{11}{10}\}$, whose projection onto $\RR^2\times\{1\}$ is $\Gamma$ itself. 
        We embed $\overline{\Gamma}$ into the region $\{(x,y,z)\mid-\frac{11}{10}\leq z\leq-\frac{9}{10}\}$ whose projection onto $\mathbb{R}^2 \times \{-1\}$ is $\overline{\Gamma}$ itself,
        such that they are symmetric with respect to the plane $\mathbb{R}^2 \times \{0\}$, where each $v_j$ is mapped to $\overline{v_j}$ by the reflection.
        \item Truncate $\Gamma$ and $\overline{\Gamma}$ at each of their vertices.
        \item Let $n_1, n_2, \ldots, n_i$ be integers. For each vertex $v_j$, connect the $r_j$ endpoints of the truncated $\Gamma$ at $v_j$ to the $r_j$ endpoints of the truncated $\overline{\Gamma}$ at $\overline{v_j}$ with $r_j$ strands, 
        applying an $n_j/r_j$ twist.
    \end{enumerate}

    The (unoriented) link consisting of the truncated $\Gamma$, $\overline{\Gamma}$, and the connecting strands is denoted by $P(\Gamma, \{n_1, n_2, \ldots, n_i\})$ and is called a \textbf{$\Gamma$-pretzel link}. A link constructed in this manner is called a \textbf{graph-pretzel link}.
\end{definition}

We refer to $\Gamma$-pretzel links and graph-pretzel links as $\Gamma$-pretzel knots and graph-pretzel knots, respectively, if they are knots.

\begin{remark}\label{fulltwist}
Let $n$ be an integer and $r$ be a positive integer.
An $n/r$ twist is formed by stacking $n$ copies of a $1/r$ twist (Figure \ref{twistsylinder}) for $n\geq0$, $-n$ copies of the mirror image of a $1/r$ twist for $n<0$. 
Furthermore, an $r/r$ twist is referred to as a full twist.
\end{remark}

\begin{figure}[htbp]
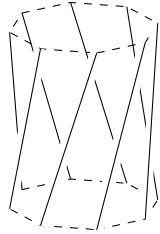

    \centering
    \twistsylinder
    \caption{1/8 twist}
    \label{twistsylinder}
\end{figure}

The next two examples demonstrate that the class of graph-pretzel links encompasses torus links and pretzel links as subclasses.

\begin{example}
    Let $\Gamma_i$ be the graph shown in Figure \ref{fig:d2-2}.
    Then, $P(\Gamma_i,\{n_1,n_2\})$ with a suitable orientation is isotopic to $T(i,n_1+n_2)$.
\end{example}

\begin{example}
    Let $\Gamma_i$ be an $i$-gon. Then, $P(\Gamma_i,\{n_1,\ldots, n_i\})$ is isotopic to (classical) pretzel knots 
    $P(n_1,n_2,\ldots,n_i)$.
\end{example}

\begin{figure}
    \centering
    \twopointgraph
    \caption{}
    \label{fig:d2-2}
\end{figure}

By definition, we have the following proposition about the bridge index of 
graph-pretzel knots.

\begin{proposition}
    Let $K = P(\Gamma,\{n_1,\ldots, n_i\})$.
    The bridge index of $K$ is less than or equal to
    the number of edges in $\Gamma$.
\end{proposition}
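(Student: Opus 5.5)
We will exhibit a bridge presentation of $K$ directly from the construction in Definition \ref{maindef}, using the height function $h(x,y,z)=z$ (after a small perturbation to make it Morse on $K$). The bridge index of $K$ is at most the number of local maxima of $h$ on any embedding of $K$ in its isotopy class. The goal is therefore to arrange the embedding so that $h|_K$ has exactly one local maximum per edge of $\Gamma$.

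First, consider the truncated copy of $\Gamma$ lying in the slab $\frac{9}{10}\le z\le\frac{11}{10}$. Each edge $e$ of $\Gamma$ joins two vertices $v_j,v_k$ (possibly $j=k$ for a loop). After truncation, $e$ becomes an arc whose two endpoints lie near $v_j$ and $v_k$. From these endpoints, connecting strands descend through the twist regions to $\overline{\Gamma}$. We isotope $e$ so that, apart from the small vertical perturbations needed to realize the crossings of the projection $\Gamma$, it is a single arc in the slab with exactly one local maximum of $h$. This can be done by placing $e$ slightly above $z=1$ in its interior, with the over/under information realized by small height differences monotone from the ends, and letting both ends descend. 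The mirror copy $\overline{\Gamma}$ is treated symmetrically: each truncated edge $\overline{e}$ is an arc with exactly one local minimum.

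Next, we realize each $n_j/r_j$ twist as a braid in the vertical region $-\frac{9}{10}\le z\le\frac{9}{10}$ near $v_j$. Here the $r_j$ strands are monotone in $z$, and a braid on monotone strands introduces no critical points of $h$. The connecting strands then join each endpoint of a truncated edge of $\Gamma$ monotonically downward to an endpoint of $\overline{\Gamma}$. As a result, the only critical points of $h$ on $K$ are one maximum in each truncated edge of $\Gamma$ and one minimum in each truncated edge of $\overline{\Gamma}$. The number of maxima is exactly the number of edges of $\Gamma$, which gives the bound.

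The step needing the most care is making each edge arc genuinely have a single maximum while realizing all crossings of $\Gamma$ and remaining compatible with the descent at the vertices. Edges cross each other in the projection, so we cannot simply place every edge at a constant height. Instead, for each edge I would choose a height profile that is unimodal along the edge, with a peak at some interior point. Every crossing must be realized with the correct over-strand higher at that point. Since we only need local height differences of size $<\frac{1}{10}$, I would first try this: pick a total order on crossings along each edge and use unimodal functions. If this is awkward, an alternative is to push each crossing's overstrand locally upward, so that its maximum becomes the edge's peak. Failing that, one can note that the bridge index equals the minimal number of maxima in any diagram, and count maxima via the standard argument that a diagram in which each edge has one "peak" gives a bridge presentation.
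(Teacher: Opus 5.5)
Your plan (one maximum of $z$ on each truncated edge of $\Gamma$, monotone braids in the twist regions, one minimum on each edge of $\overline{\Gamma}$) is exactly the count the paper has in mind when it says the proposition holds ``by definition''. The step you flagged, however, is a genuine gap, and it cannot be closed in the stated generality.

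The reason is that $z$ is also the projection direction that encodes the crossings of $\Gamma$. A truncated edge with a unimodal $z$-profile, whose two ends descend into the connecting strands, is an unknotted, boundary-parallel arc in the half-space above the twist regions. Indeed, closing it by an arc in a horizontal plane below gives a curve with a single maximum, which is the unknot. So if some edge of $\Gamma$ is knotted, no choice of unimodal profiles can realize the crossings. Your fallbacks do not rescue this. Pushing over-strands up produces one maximum per over-bridge of the diagram, not per edge. The standard diagrammatic argument bounds the bridge index by the number of over-bridges, which can exceed the number of edges.

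In fact the proposition itself fails for such $\Gamma$. Put two vertices $v_1,v_2$ on a crossing-free arc of the standard trefoil diagram. Then $\Gamma$ is a diagram of a $2$-gon with a crossing-free edge $e_1$ and an edge $e_2$ carrying all three crossings. The vertical curtain between $e_1$ and the top of the twist regions misses the rest of $K$. Hence the part of $K$ above the twist regions is the planar $2$-gon tangle with a local trefoil tied into $e_2$, and the part below is its mirror image. So $P(\Gamma,\{1,0\})$ is the connected sum of $P(\text{planar }2\text{-gon},\{1,0\})$ (the unknot) with the trefoil and its mirror image. That is, it is the square knot, whose bridge index is $3>2$ by Schubert's theorem.

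What your argument does prove is the bound under an extra hypothesis. One such hypothesis is that $\Gamma$ has no crossings; this holds for every example in the paper, including the tetrahedral graph used for $K_n$. In that case each edge can be placed at height $z=1$ with one small bump, and your count goes through verbatim. More generally, it suffices that the truncated $\Gamma$, with its endpoints pushed straight down to a plane just below the slab, forms a trivial tangle.
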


Also by definition, 
the mirror image of a graph-pretzel link is given by the following proposition.

\begin{proposition}
    The mirror image of $P(\Gamma,\{n_1,\ldots,n_i\})$ is isotopic to $P(\Gamma, \{-n_1,\ldots,-n_i\})$.
\end{proposition}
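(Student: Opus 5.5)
The plan is to realize the mirror operation concretely in $\RR^3$ and track how it changes each piece of the construction in Definition \ref{maindef}. Take the reflection $\sigma(x,y,z)=(x,y,-z)$ through the plane $\RR^2\times\{0\}$. The mirror image of a link $L$ is, up to isotopy, $\sigma(L)$. So it suffices to show that $\sigma\bigl(P(\Gamma,\{n_1,\ldots,n_i\})\bigr)$ is itself a link built by the recipe of Definition \ref{maindef}, with data $\Gamma$ and $\{-n_1,\ldots,-n_i\}$, possibly after an isotopy.

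\textbf{Step 1: the two graph pieces.} By step (1) of the definition, the embedded $\Gamma$ in the slab near $z=1$ and the embedded $\overline{\Gamma}$ in the slab near $z=-1$ are symmetric with respect to $\RR^2\times\{0\}$. Hence $\sigma$ sends the truncated $\Gamma$ onto the truncated $\overline{\Gamma}$ and vice versa, matching the endpoints at $v_j$ with those at $\overline{v_j}$. The union of the two truncated graphs is therefore $\sigma$-invariant as a set. The only change is which copy sits on top, and the recipe is symmetric in that respect: we may relabel the top copy (now a copy of $\overline{\Gamma}$) as the graph, and its reflection is then $\Gamma$. A cleaner way to avoid the relabeling is to compose $\sigma$ with the rotation by $\pi$ about a horizontal axis lying in the plane $z=0$ that maps each vertex position to itself; I will decide which version is cleaner when writing the argument.

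\textbf{Step 2: the twist regions.} Near each vertex $v_j$ the connecting strands form a vertical cylinder containing $r_j$ strands with an $n_j/r_j$ twist. By Remark \ref{fulltwist} this is a stack of $|n_j|$ copies of the $1/r_j$ twist or of its mirror. A reflection in a horizontal plane reverses the handedness of a twist: $\sigma$ maps a $1/r_j$ twist cylinder to the mirror image of a $1/r_j$ twist. Consequently the connecting tangle at $v_j$ becomes an $(-n_j)/r_j$ twist joining the same endpoints.

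\textbf{Main obstacle.} The delicate point is the relabeling in Step 1. Once the top and bottom graphs are swapped, the claim requires that the spatial graph $\Gamma$ appear on top again with the same projection, rather than $\overline{\Gamma}$. For a general projection $\Gamma$, its mirror $\overline{\Gamma}$ need not be isotopic to $\Gamma$, so I must check that swapping the roles of the two copies still yields the construction with the same $\Gamma$. The idea is that the whole configuration, viewed from below, looks exactly like the definition applied to $\overline{\Gamma}$ on top and $\Gamma$ below, and turning the picture over restores $\Gamma$ on top.

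To carry this out, I would apply the rotation $\rho$ by $\pi$ about the $x$-axis. It maps $z\mapsto -z$ and $y\mapsto -y$, so it swaps the two slabs and reflects the projection in the line $y=0$. Then $\rho\circ\sigma$ is the reflection $y\mapsto -y$, which fixes each slab, sends $\Gamma$ to a planar-reflected copy, and reverses all twists. I expect to need that this planar reflection realizes the mirror of the spatial graph consistently on both copies, so that the result is $P(\Gamma',\{-n_j\})$ with $\Gamma'$ identified with $\Gamma$ via the symmetric setup. Verifying this identification carefully, with all crossing information and twist directions consistent, is the step I expect to require the most care.
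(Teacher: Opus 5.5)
Your strategy (apply $\sigma(x,y,z)=(x,y,-z)$ and track the pieces) is exactly what the paper means by ``by definition,'' and your Steps 1 and 2 are correct. The gap is that you do not draw the conclusion those two steps already give. Instead you invent an obstacle that does not exist, and leave the argument unfinished.

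Write $L=T\cup B\cup\bigcup_j C_j$. Here $T$ is the truncated $\Gamma$ near $z=1$, $B=\sigma(T)$ is the truncated $\overline{\Gamma}$ near $z=-1$, and $C_j$ is the $n_j/r_j$ twist region at $v_j$. Then $\sigma(T)=B$ and $\sigma(B)=\sigma^2(T)=T$. So the graph part of $\sigma(L)$ near $z=1$ is literally the set $T$, with its original projection and crossings. It is not ``a copy of $\overline{\Gamma}$,'' nothing needs relabeling, and whether $\overline{\Gamma}$ is isotopic to $\Gamma$ never arises.

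Now combine this with Step 2. If a bottom endpoint $\sigma(e_k)$ is joined to the top endpoint $e_{k+n_j}$ (indices mod $r_j$ in the cyclic order at $v_j$), then $\sigma$ joins $e_k$ to $\sigma(e_{k+n_j})$. That is a rotation by $-n_j$ steps with reversed handedness, i.e.\ a $-n_j/r_j$ twist on the same endpoints. Hence $\sigma(L)=P(\Gamma,\{-n_1,\ldots,-n_i\})$ exactly. Since $\sigma$ is orientation-reversing, $\sigma(L)$ is the mirror image of $L$, and that is the whole proof.

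The detour through $\rho\circ\sigma$ (the reflection $y\mapsto -y$) is therefore unnecessary, and as you state it, it aims at the wrong target. The planar reflection $\Gamma'$ of the diagram $\Gamma$ is a diagram of the mirror spatial graph, so in general it cannot be ``identified with $\Gamma$.'' What is true is $\rho\sigma(L)=P(\Gamma',\{-n_j\})=\rho\bigl(P(\Gamma,\{-n_j\})\bigr)$, where $\rho$ is a rotation and hence isotopic to the identity. Checking that second equality comes down to the same observation: the slab-swapping map carries the bottom copy onto the top one.

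As written, your proposal announces this verification as the hardest step but never carries it out, so it is not yet a proof. Replacing the ``main obstacle'' discussion with the one-line observation $\sigma(B)=T$ closes it.
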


\section{The case for the complete graph on four vertices}

In this section, we discuss the link $P(\Gamma,\{n_1,n_2,n_3,n_4\})$
for the case where $\Gamma$ is the complete graph on four vertices.
This family of links is used in Theorem \ref{maintheorem}.

In what follows, 
we assume $\Gamma=\adjustbox{valign=c}{\scalebox{0.4}{\tetragraphwithv}}$.
From the symmetry of the tetrahedron, we obtain the following proposition.
\begin{proposition}\label{mainprop}
    $P(\Gamma,\{n_1,n_2,n_3,n_4\})$ is invariant under the action of the symmetric group $S_4$ 
    on the parameters $(n_1,n_2,n_3,n_4)$.
\end{proposition}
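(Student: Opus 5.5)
Fix a planar projection $\Gamma$ of $K_4$: vertex $v_4$ sits at the center of a triangle with vertices $v_1,v_2,v_3$, and every vertex has valence three. The strategy is to realize each permutation of the vertex labels by an ambient isotopy of $\mathbb{R}^3$ that commutes with the reflection in $\mathbb{R}^2\times\{0\}$ and carries the data of Definition~\ref{maindef} to itself with permuted labels.

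\textbf{Step 1: reduce to transpositions.} Since $S_4$ is generated by transpositions, it suffices to treat the transpositions, and in fact only a few of them. The rotation by $2\pi/3$ about the vertical axis through $v_4$ preserves $\Gamma$ and $\overline{\Gamma}$ setwise and commutes with the reflection. It also carries the twisted tube at $v_j$ to the twisted tube at the image vertex, since a rotation about a vertical axis preserves the handedness of each $n_j/3$ twist. Hence the link is invariant under cyclically permuting $(n_1,n_2,n_3)$. A reflection of the plane across the line through $v_4$ and $v_1$ preserves the abstract graph, but it reverses the handedness of the twists, so it is not admissible. Instead, I will obtain the transposition $(2\,3)$ from an isotopy of the spatial graph $\Gamma$ in the slab $\frac{9}{10}\le z\le\frac{11}{10}$, performed symmetrically in the lower slab.

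\textbf{Step 2: the key move.} Regard the embedded $K_4$ as the $1$-skeleton of a tetrahedron in $\mathbb{R}^3$. Every permutation of its vertices is induced by an isometry of the regular tetrahedron. The orientation-preserving ones form $A_4$, and the remaining ones compose with a reflection. The natural setup is to treat the whole construction as follows: $\Gamma\times[-1,1]$ thickened is a handlebody $H$, and the link lies on the boundary of a neighborhood $N$ of $\Gamma\cup\overline{\Gamma}\cup$ (vertical arcs). Reinterpreted this way, the link is obtained from the ``doubled'' graph $\Gamma\times\{\pm1\}\cup(\text{vertices}\times[-1,1])$ by replacing the vertical arc at $v_j$ with a three-strand tube twisted $n_j/3$ times. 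An isotopy of $\mathbb{R}^3$ carrying the planar $K_4$ to itself with vertices permuted extends, by taking product with $[-1,1]$, to an isotopy of the doubled graph. Preserving the tube at each vertex, this gives an isotopy of links, provided the cyclic order of the edges at each vertex (the framing of the tube) is respected up to a full rotation of the tube. A rotation of the tube is absorbed because spinning the end discs by $2\pi/3$ at both ends simultaneously does not change the twist.

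\textbf{Step 3: realize $(2\,3)$ concretely.} Redraw the planar $K_4$ with $v_1$ in the center instead of $v_4$. This is the standard isotopy of the tetrahedral graph in $S^2$, which moves the point at infinity across a face. Doing the same move upstairs and downstairs, the tubes go along for the ride. Combined with the rotation from Step 1, this generates $S_4$.

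\textbf{Main obstacle.} The hard step is checking that the handedness of the twists is preserved. Moving the point at infinity across a face in $S^2$ reverses the cyclic orientation at the vertices when it is viewed from $\mathbb{R}^2$. I expect to have to check carefully that it in fact preserves the orientation of $S^2$, so the cyclic edge order at every vertex is preserved and the twists are unchanged. I will verify this on the picture, possibly using a figure.
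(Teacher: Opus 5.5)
There is a genuine gap: every move you actually construct realizes an \emph{even} permutation, so your argument yields at most $A_4$-invariance, not $S_4$.

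\textbf{Why only even permutations.} The $2\pi/3$ rotation of Step 1 and the face move of Step 3 both act on the level $\Gamma$ as orientation-preserving self-homeomorphisms of the tetrahedral map in $S^2$. Here the face move means swinging the outer edge $v_2v_3$ over the rest of $\Gamma$ through $z>1$, and doing the same for $\overline{\Gamma}$ through $z<-1$. The vertex permutations induced by orientation-preserving maps of this kind form the rotation group of the tetrahedron, which is $A_4$.

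The fact you plan to verify in your ``main obstacle'' paragraph is true. The move only rotates the two ends of $v_2v_3$ inside the free angular sectors at $v_2$ and $v_3$, so every cyclic edge order, and every twist, is preserved. But that is exactly what rules out a transposition. Suppose the outer triangle, read counterclockwise, was $(v_1,v_2,v_3)$. After the move, $v_1$ is the inner vertex and the outer triangle reads $(v_2,v_4,v_3)$. Matching this with the standard picture shows that the link with parameters $(n_1,n_2,n_3,n_4)$ is isotopic to the one with $(n_2,n_4,n_3,n_1)$, or with $(n_4,n_3,n_2,n_1)$ for another matching. So you get a 3-cycle, or the swap $v_1\leftrightarrow v_4$ bundled with $v_2\leftrightarrow v_3$, but never $(2\,3)$ alone. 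Separately, even a transposition $(2\,3)$ combined only with the rotations fixing $v_4$ would generate just $S_3$.

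Your Steps 1 and 3 therefore give a valid proof of $A_4$-invariance, but the odd half of $S_4$ is missing.

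\textbf{The missing ingredient.} You need the symmetry you discarded, composed with the exchange of the two levels.
\begin{itemize}
\item The reflection of the plane across the line through $v_4$ and $v_1$ reverses the handedness of the tubes.
\item So does the reflection $z\mapsto -z$ in $\mathbb{R}^2\times\{0\}$, which interchanges $\Gamma$ and $\overline{\Gamma}$.
\item Their composite is the rotation by $\pi$ about the horizontal line in $\mathbb{R}^2\times\{0\}$ lying below the segment $v_4v_1$. This is orientation-preserving, so each $n_j/3$ twist keeps its handedness: a helix turned upside down has the same handedness.
\item It carries the crossingless, symmetrically drawn $\Gamma$ at height $1$ onto $\overline{\Gamma}$ at height $-1$, and back.
\item It maps the tubes at $v_1$ and $v_4$ to themselves (flipped), and exchanges the tubes at $v_2$ and $v_3$.
\end{itemize}
Hence $P(\Gamma,\{n_1,n_2,n_3,n_4\})$ is isotopic to $P(\Gamma,\{n_1,n_3,n_2,n_4\})$. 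Together with your $A_4$, this gives $S_4$.

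This is exactly the paper's proof: $A_4$ comes from the rotational symmetry of the tetrahedral picture, and the $\pi$-rotation interchanging top and bottom supplies one transposition.
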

\begin{proof}
    The link $P(\Gamma,\{n_1,n_2,n_3,n_4\})$ can be formed as shown in Figure \ref{fig:tetrahedron-b}.
    The symmetry of the tetrahedron reveals its invariance under the 
    congruence transformations of the tetrahedron.
    The tetrahedral group is isomorphic to the alternating group of degree four $A_4$.
    Thus, we confirm its invariance under $A_4$.

    Next, we consider a $\pi$ rotation of the constructed $\Gamma$-pretzel link that interchanges the top and bottom in $\mathbb{R}^3$. 
    Mapping this back onto the tetrahedron corresponds to interchanging two of the original parameters, say, $n_i$ and $n_j$. 
    This operation effectively adds a transposition of two vertices to the tetrahedral group,
    which implies that the link is invariant under the action of the symmetric group $S_4$.
\end{proof}

\begin{figure}
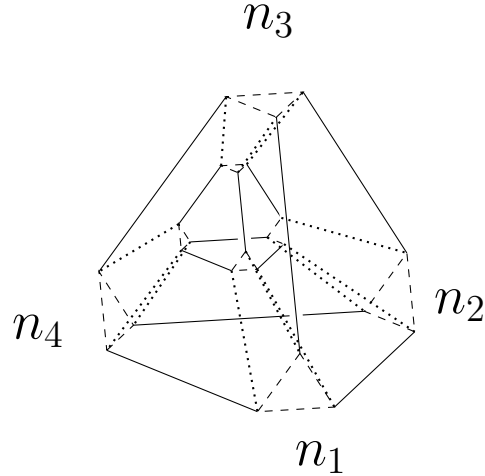

    \centering
    \truncatedtetraknott
    \caption{Broken lines represent twist regions. $n_i$ shows the number of twists.}
    \label{fig:tetrahedron-b}
\end{figure}

\begin{remark}
    $P(\Gamma,\{n_1,n_2,n_3,n_4\})$ has a pretzel-like diagram. 
    See Figure \ref{fig}.
\end{remark}

\begin{figure}[htbp]
    \centering
    \begin{subfigure}{0.45\linewidth}
        \centering
        \scalebox{0.5}{\threepretzeln}
        \caption{From left to right: $n_1,n_2,n_3,n_4$ ($n_i$ is a positive integer)}
        \label{fig:d2-3}
    \end{subfigure}
    \hspace{0.05\linewidth}
    \begin{subfigure}{0.45\linewidth}
        \centering
        \scalebox{0.5}{\threepretzelmn}
        \caption{From left to right: $-n_1, -n_2, -n_3, -n_4$ $(n_i\text{ is a positive integer})$}
        \label{fig:d2-4}
    \end{subfigure}
    \caption{}
    \label{fig}
\end{figure}

\begin{remark}
    By Proposition \ref{mainprop}, 
    the knot $K_n$ defined in Theorem 1.1 does not depend on the order of the parameters $\{-2, 2, -(3n+1), 3\}$.
\end{remark}

\section{Proof of Main Theorems}

In this section, we prove two main theorems. 

\subsection{Proof of Theorem \ref{maintheorem}}
First, we compute the Alexander polynomial using the skein relation and 
the Jones polynomial using the Kauffman bracket (for details, see, for example \cite{Lickorish}). 

For computational convenience, 
we use the Conway polynomial instead of the Alexander polynomial. 
The Conway polynomial $\nabla(L)\in \ZZ[z]$ for a link $L$ 
is defined such that its value for the unknot is $1$,
and it satisfies the skein relation:
\begin{equation*}
    \nabla_{\positivecrossing}(z) - \nabla_{\negativecrossing}(z) = z\nabla_{\smoothing}(z).
\end{equation*}
The Alexander polynomial of a link $L$, denoted $\Delta_{L}(t)$, can be recovered from the
Conway polynomial by substituting
$z = t^{\frac{1}{2}}-t^{-\frac{1}{2}}$.

The Kauffman bracket $\la L\ra\in \ZZ[A,A^{-1}]$ for an unoriented link $L$ 
is defined such that its value for the unknot with no crossings is 1, 
and it satisfies the relation:
\begin{equation*}
    \left\langle\crossing\right\rangle = A\left\langle\crossingzero\right\rangle + A^{-1}\left\langle\crossinginfty\right\rangle.
\end{equation*}
Using the Kauffman bracket, the Jones polynomial of a link $L$ is given by
\begin{equation*}
    J_L(q)= (-A^3)^{-w(D)}\langle\Db\rangle|_{A^2=q^{-1/2}},
\end{equation*}
where $D$ is a diagram of $L$, $\overline{D}$ is an unoriented diagram obtained by forgetting the orientation of $D$, and $w(D)$ is the writhe of $D$.

To prove Theorem \ref{maintheorem}, we show the following lemma.
\begin{lemma}\label{mainlemma}
    \begin{equation*}
        \begin{split}
            J_{K_{n}}(q) &= (q^{3n+2}-q^{2})(1-q^{-1}+q^{-3}-2q^{-4}+q^{-5}-q^{-7}+q^{-8})+1.
        \end{split}
    \end{equation*}
\end{lemma}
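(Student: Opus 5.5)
We plan to compute $J_{K_n}(q)$ by isolating the single twist region that depends on $n$ and resolving it with the Kauffman bracket. By Proposition \ref{mainprop} we may order the parameters as $\{-2,2,3,-(3n+1)\}$. We then use the pretzel-like diagram of Figure \ref{fig}, in which the vertex with parameter $-(3n+1)$ becomes a region of $3n+1$ crossings on three strands. This region is a power of the $1/3$ twist $\sigma_2\sigma_1$ (or its inverse), and the full twist is $(\sigma_2\sigma_1)^3$. So $-(3n+1)$ amounts to $n$ negative full twists followed by one extra negative $1/3$ twist. This explains the period $3$ in the statement and why the formula depends only on $q^{3n}$.

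The key steps are as follows.

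\textbf{Step 1: reduce to the full twist.} We expand the Kauffman bracket of the $3$-strand tangle $\Delta^{-2n}$, where $\Delta^2$ is the full twist. We use the Temperley--Lieb algebra $TL_3$ with basis $1, e_1, e_2, e_1e_2, e_2e_1$. Rather than expanding crossing by crossing, we exploit the fact that the full twist is central. It acts on the Jones--Wenzl decomposition by scalars: on the $3$-strand part it is $A^{\pm 18}$ times the identity, and on the part passing through a single strand it is a different explicit scalar. Hence $\langle \Delta^{-2n}\rangle$, inserted into the fixed remainder of the diagram, takes the form $\alpha\,\lambda_1^{n}+\beta\,\lambda_2^{n}$ for two eigenvalues $\lambda_1,\lambda_2$. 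Here $\alpha,\beta\in\ZZ[A^{\pm1}]$ do not depend on $n$.

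\textbf{Step 2: compute the two coefficients.} We obtain $\alpha,\beta$ by evaluating the bracket of the fixed remainder of the diagram in two cases. The remainder consists of the twist regions $-2$, $2$, $3$ together with the extra $1/3$ twist, and we close it with the appropriate idempotent or basis element. Equivalently, we can compute $\langle K_n\rangle$ directly for $n=0$ and $n=1$ (or $n=0,1,2$ as a check) and solve a linear recurrence of the form $x_{n+2}=(\lambda_1+\lambda_2)x_{n+1}-\lambda_1\lambda_2x_n$. The case $n=0$ is the unknot, which gives a strong consistency check.

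\textbf{Step 3: normalize.} We compute the writhe $w(D_n)$. For this we fix an orientation and check whether the strands in the $-(3n+1)$ region run parallel. They must all run in the same direction, since otherwise the $n$-dependence would not be a pure geometric series in $q^{3}$. This gives $w(D_n)=w_0+\epsilon(3n+1)\cdot 2$ or a similar linear expression; note that a $1/3$ twist on three strands contributes two crossings. We then multiply by $(-A^3)^{-w}$ and substitute $A^2=q^{-1/2}$. After this, one eigenvalue term must become the constant $1$ plus a multiple of $q^{2}$, and the other must become the $q^{3n+2}$ term. We finish by factoring out $1-q^{-1}+q^{-3}-2q^{-4}+q^{-5}-q^{-7}+q^{-8}$. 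The vanishing at $n=0$, namely $(q^{2}-q^2)(\cdots)+1=1$, is consistent with $K_0$ being the unknot.

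The main obstacle is Step 2: carrying out the bracket computation of the fixed part of the diagram, which has roughly fourteen crossings. We would organize it by also resolving the other twist regions as rational tangles: each $2$-strand twist region of $m$ crossings has bracket given by a closed formula in $A$. We would further reduce the $3$-strand regions with the parameters $\pm2$ and $3$ via $TL_3$ matrices, so that the whole computation becomes a product of $5\times5$ matrices followed by a trace-type closure. Bookkeeping of orientations for the writhe is the secondary risk.
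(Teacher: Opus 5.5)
Your framework matches the paper's: isolate the $n$ full twists, get a second-order linear recurrence for $\langle\overline{K_n}\rangle$, fix it with the values at $n=0,1$, and normalize by the writhe. However, the two quantitative inputs you commit to are both wrong, and each one alone would give an incorrect formula.

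\emph{The eigenvalues.} Write a crossing as $A\cdot 1+A^{-1}e_i$ in $TL_3$. Since $e_if_3=0$, each crossing acts on the Jones--Wenzl projector $f_3$ by $A$. The six-crossing full twist therefore acts there by $A^{6}$, not $A^{18}$. On the one-through-strand summand, $\sigma_1\sigma_2$ has characteristic polynomial $x^2+A^{-2}x+A^{-4}$. Its roots are $A^{-2}$ times primitive cube roots of unity, so the full twist acts there by $A^{-6}$. For a twist of the opposite sign the two values are exchanged. Hence $\lambda_1\lambda_2=1$ and $\lambda_1+\lambda_2=A^6+A^{-6}$, which is exactly the paper's recurrence $\langle\overline{K_n}\rangle-(A^6+A^{-6})\langle\overline{K_{n-1}}\rangle+\langle\overline{K_{n-2}}\rangle=0$. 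Two initial values cannot repair wrong recurrence coefficients. With $A^{\pm18}$ your closed form would be wrong, and your check at $n=2$ would fail.

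\emph{The orientation and writhe.} The three strands through the $-(3n+1)$ region are \emph{not} all parallel: one runs against the other two. So each full twist changes the writhe by $\pm2$, not $\pm6$; the paper finds $w(K_n)=2n-2$, and your $w_0+2\epsilon(3n+1)$ is wrong.

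Your justification also runs backwards. In either orientation the $n$-dependence is through powers of $q^{3}$; what differs is whether an $n$-independent term survives.
\begin{itemize}
\item With parallel strands, the writhe factor per full twist is $(-A^3)^{\pm6}=A^{\pm18}$. The true bracket eigenvalues are $A^{\pm6}$, so the normalized eigenvalues are $A^{\pm12}$ and $A^{\pm24}$, never $1$. Then $J_{K_n}$ would have no $n$-independent part, contradicting the nonzero term $1-q^{2}(1-q^{-1}+\cdots+q^{-8})$ in the lemma.
\item With the $2+1$ orientation, the factor per full twist is $A^{\pm6}$. It exactly cancels the one-through-strand eigenvalue, giving the constant part, and turns the $f_3$ eigenvalue into $A^{\pm12}$, i.e.\ $q^{\mp3}$, giving the $q^{3n}$ term.
\end{itemize}
Your two errors compensate in the \emph{shape} $c+c'q^{3n}$, but not in the actual coefficients. (Minor point: the twist region has $2(3n+1)$ crossings, not $3n+1$.)
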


\begin{proof}
    Applying the Kauffman bracket relation to the full twists in Figure \ref{todos}, 
    we have a recurrence relation
    \begin{equation*}
        \left\langle \overline{K_{n}} \right\rangle-(A^6+A^{-6})\left\langle \overline{K_{n-1}}\right\rangle + \left\langle \overline{K_{n-2}} \right\rangle=0.
    \end{equation*}
    We have $\la \overline{K_0}\ra = 1$, and the initial value
    \begin{equation*}
        \left\langle \overline{K_{1}} \right\rangle=-A^{24}+A^{20}+A^8-A^4+2-A^{-4}-A^{-16} + A^{-20}
    \end{equation*}
    can be verified directly by the Kauffman bracket relation.
    Solving this recurrence relation with the initial values, we obtain 
    \begin{equation*}
        \la \overline{K_{n}} \ra = -A^{-14}(A^{6n}-A^{-6n})(1-A^4+A^{12}-2A^{16}+A^{20}-A^{28}+A^{32})+A^{6(n-1)}.
    \end{equation*}

    We obtain $w(K_n)=2n-2$ from Figure \ref{todoe}.

    Now we can compute the Jones polynomial of $K_n$ as
    \begin{alignat*}{1}
        J_{K_{n}}(q) &= -(A^{-6(n-1)})A^{-14}(A^{6n}-A^{-6n})(1-A^4+A^{12}-2A^{16}+A^{20}-A^{28}+A^{32})+1\\
        &=(A^{-12n-8}-A^{-8})(1-A^4+A^{12}-2A^{16}+A^{20}-A^{28}+A^{32})+1 \\
        &=(q^{3n+2}-q^{2})(1-q^{-1}+q^{-3}-2q^{-4}+q^{-5}-q^{-7}+q^{-8})+1.
    \end{alignat*}
\end{proof}

\begin{figure}[htbp]
    \centering
    \begin{subfigure}{0.45\linewidth}
        \centering
        \scalebox{0.3}{\includegraphics{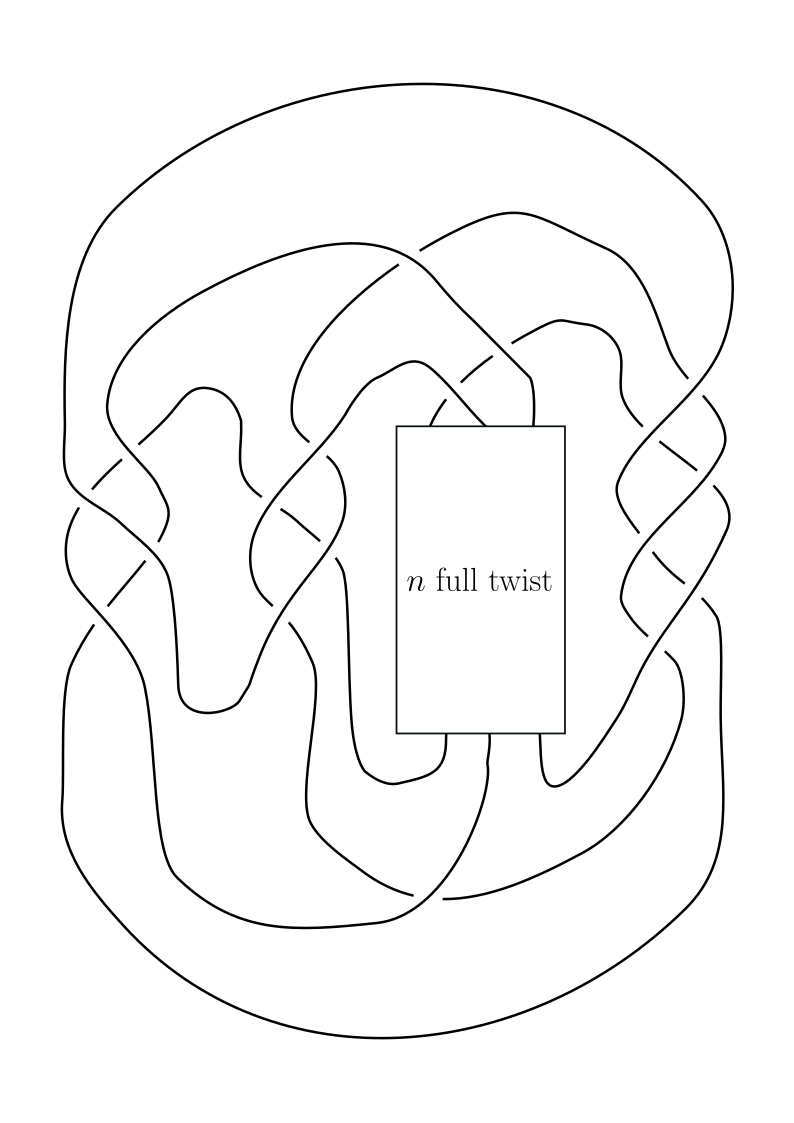}}
        \caption{}
        \label{todos}
    \end{subfigure}
    \hspace{0.05\linewidth}
    \begin{subfigure}{0.45\linewidth}
        \centering
        \scalebox{0.3}{\includegraphics{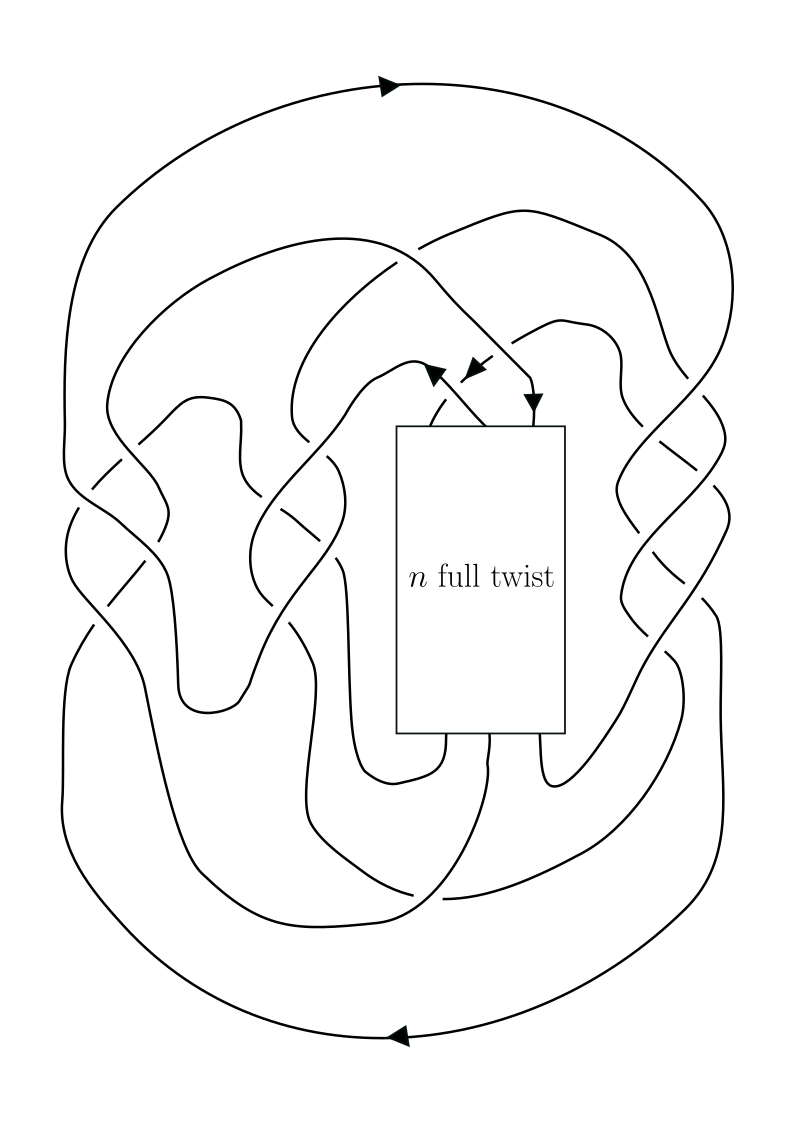}}
        \caption{}
        \label{todoe}
    \end{subfigure}
    \caption{$n$ in the diagram is a negative integer}
    \label{figer}
\end{figure}

Now we can prove Theorem \ref{maintheorem}.
\begin{proof}[Proof of Theorem \ref{maintheorem}]
    Applying the skein relation of the Conway polynomial 
    to the full twists in Figure \ref{todoe}, we have the recurrence relation
    \begin{equation*}
        \nabla_{K_{n}}(z) -(3+z^2)\nabla_{K_{n-1}}(z)+(3+z^2)\nabla_{K_{n-2}}(z)-\nabla_{K_{n-3}}(z) =0.
    \end{equation*}
    Since $K_0$ is the unknot, we have $\nabla_{K_0}(z)=1$. 
    The initial values $\nabla_{K_1}(z)=\nabla_{K_2}(z)=1$ 
    can be obtained by the skein relation of the Conway polynomial.
    Consequently, we get $\nabla_{K_n}(z)=1$ and so $\Delta_{K_n}(t)=1$.
    From Lemma \ref{mainlemma}, we have
    \begin{equation*}
        \begin{split}
            J_{K_{n}}(q) &= (q^{3n+2}-q^{2})(1-q^{-1}+q^{-3}-2q^{-4}+q^{-5}-q^{-7}+q^{-8})+1.
        \end{split}
    \end{equation*}
    This implies that $K_n$ and $K_m$ are not isotopic if $n\neq m$.
\end{proof}

\subsection{Proof of Theorem \ref{maintheoremtwo}}

\begin{proof}[Proof of Theorem \ref{maintheoremtwo}]
By cutting the band encircled by the red dashed line in Figure \ref{zu}, 
we obtain two trivial circles, as shown in Figure \ref{zuni}. 
This shows that $K_1$ is a band sum of two trivial circles, 
which implies that it is a ribbon knot. 
Moreover, adding negative full twists does not affect this property. 
Therefore, we can conclude that $K_n$ is also a ribbon knot for any positive integer $n$. 
This concludes the proof.
\end{proof}

\begin{figure}[htbp]
    \centering
    \begin{subfigure}{0.45\linewidth}
        \centering
        \scalebox{0.3}{\includegraphics{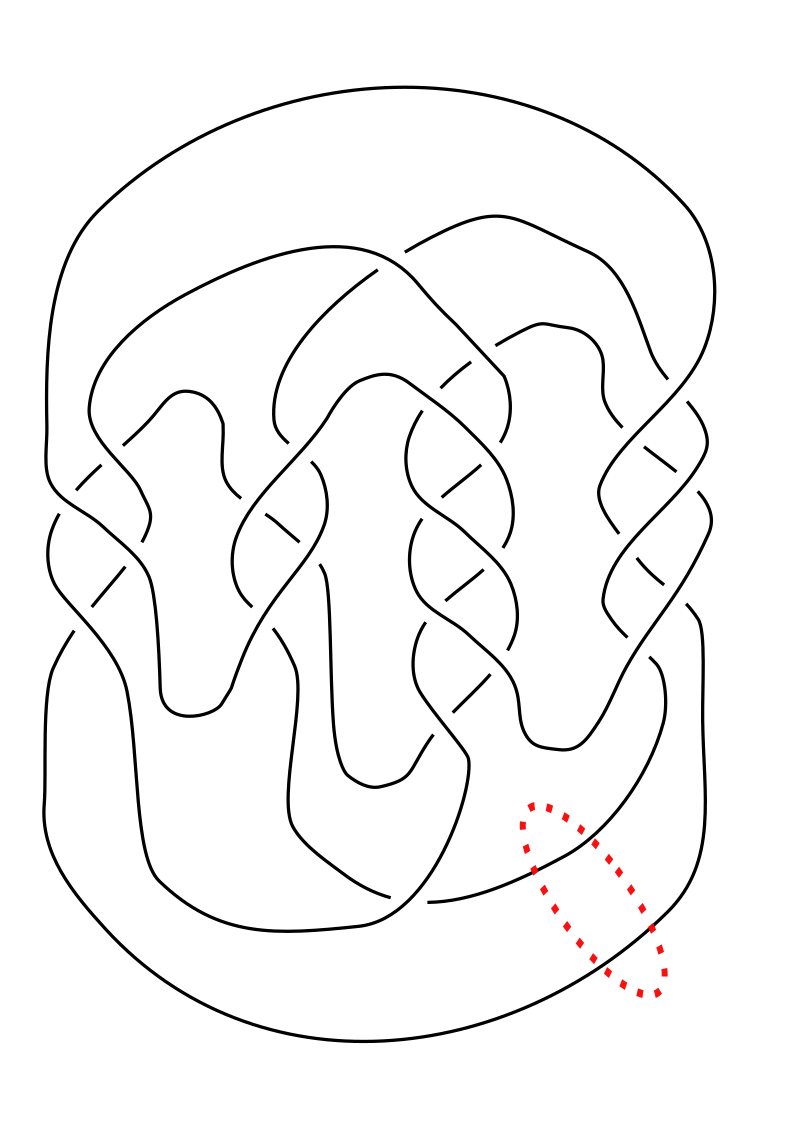}}
        \caption{$K_1$}
        \label{zu}
    \end{subfigure}
    \hspace{0.05\linewidth}
    \begin{subfigure}{0.45\linewidth}
        \centering
        \scalebox{0.3}{\includegraphics{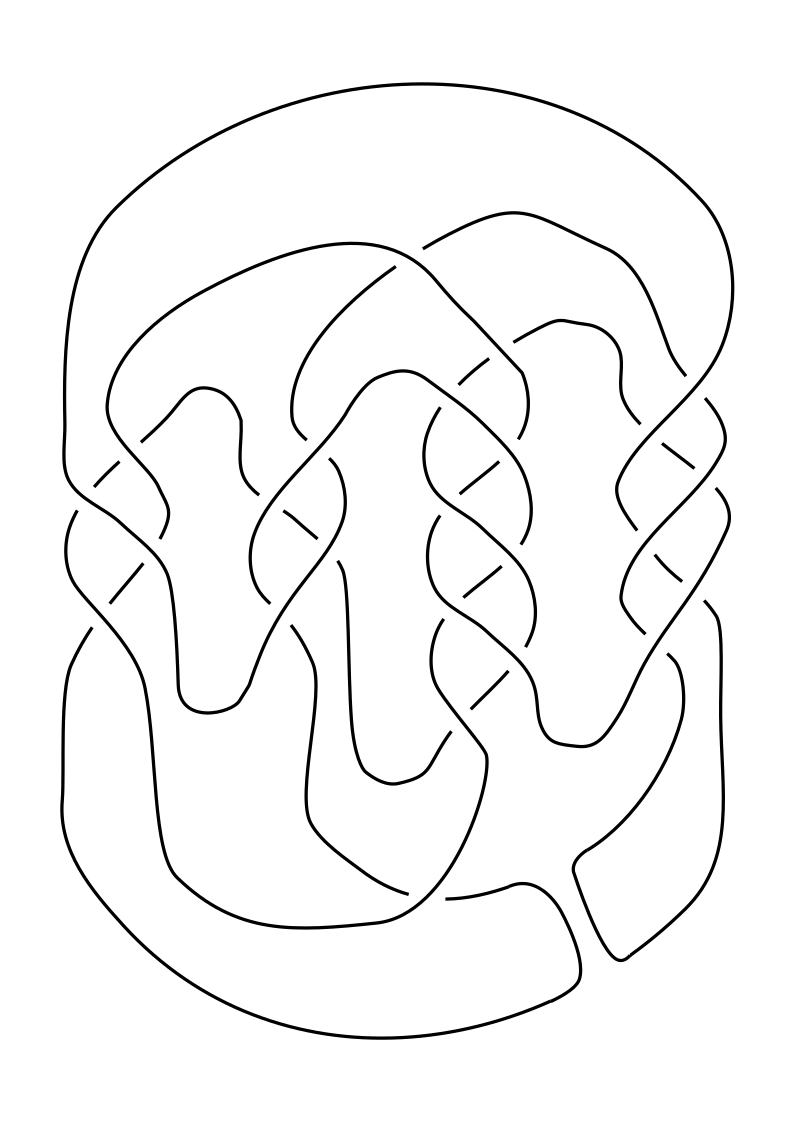}}
        \caption{}
        \label{zuni}
    \end{subfigure}
    \caption{Band surgery}
    \label{nahan}
\end{figure}

\section{Concluding Remarks}
In this paper, we introduced the framework of graph-pretzel links and demonstrated its utility by constructing an infinite family of ribbon knots with $\Delta(t)=1$ from the complete graph on four vertices. 
A natural direction for future research is to investigate graph-pretzel links associated with other spatial graphs. 
Exploring how the choice of the underlying graph and twist parameters relates to various knot invariants may yield further interesting examples and infinite families in low-dimensional topology.

\section*{Acknowledgements}
The author would like to thank Shin-ichi Oguni for his thoughtful guidance and helpful discussions about this work.

\bibliographystyle{plain}
\bibliography{sanko}

\end{document}

%% file: knotdiagrams.tex
\newcommand{\crossing}{
    \begin{tikzpicture}[scale=0.25,baseline = {(0,-0.1)}]
        \draw (-1,1) to (1,-1);
        \draw[white, line width = 7pt] (0.5,0.5) to (-0.5,-0.5);
        \draw (1,1) to (-1,-1);
    \end{tikzpicture}
}

\newcommand{\crossingzero}{
    \begin{tikzpicture}[scale=0.25,baseline = {(0,-0.1)}]
        \draw (1, 1) to[curve through = {(0.5, 0)}] (1, -1);
        \draw (-1, 1) to[curve through = {(-0.5, 0)}] (-1, -1);
    \end{tikzpicture}
}

\newcommand{\crossinginfty}{
    \begin{tikzpicture}[scale=0.25,baseline = {(0,-0.1)}]
        \draw (1, 1) to[curve through = {(0, 0.5)}] (-1, 1);
        \draw (1, -1) to[curve through = {(0, -0.5)}] (-1, -1);
    \end{tikzpicture}
}

\newcommand{\positivecrossing}{
    \begin{tikzpicture}[scale=0.2,baseline = {(0,-0.1)}]
        \draw[->] (-1,1) to (1,-1);
        \draw[white, line width = 7pt] (0.5,0.5) to (-0.5,-0.5);
        \draw[->] (1,1) to (-1,-1);
    \end{tikzpicture}
}

\newcommand{\negativecrossing}{
    \begin{tikzpicture}[scale=0.2,baseline = {(0,-0.1)}]
        \draw[->] (1,1) to (-1,-1);
        \draw[white, line width = 7pt] (-0.5,0.5) to (0.5,-0.5);
        \draw[->] (-1,1) to (1,-1);
    \end{tikzpicture}
}

\newcommand{\smoothing}{
    \begin{tikzpicture}[scale=0.25,baseline = {(0,-0.1)}]
        \draw[->] (1, 1) to[curve through = {(0.5, 0)}] (1, -1);
        \draw[->] (-1, 1) to[curve through = {(-0.5, 0)}] (-1, -1);
    \end{tikzpicture}
}

\newcommand{\twistsylinder}{
    \tdplotsetmaincoords{70}{80} 
    \begin{tikzpicture}[scale=0.5]
        \begin{scope}[tdplot_main_coords]
            \def\radius{2}
            \def\height{5}
            \def\numpts{8} % Number of points on each circle

            \foreach \i in {0,...,\numpts} {
                \pgfmathsetmacro{\angle}{(\i/\numpts)*360}
                \coordinate (U\i) at (\radius*cos{\angle}, \radius*sin{\angle}, \height);
            }
            \foreach \i in {0,...,\numpts} {
                \pgfmathsetmacro{\angle}{(\i/\numpts)*360}
                \coordinate (L\i) at (\radius*cos{\angle}, \radius*sin{\angle}, 0);
            }

            \draw[dashed] (L0) \foreach \i in {1,...,\numpts} {-- (L\i)} -- cycle; % Bottom circle
            \draw[dashed] (U0) \foreach \i in {1,...,\numpts} {-- (U\i)} -- cycle; % Top circle
            
            \foreach \i in {0,...,\numpts} {
                \pgfmathsetmacro{\nextval}{int(mod(\i, 8)) + 1}
                \pgfmathsetmacro{\isTarget}{ifthenelse(\i > 5 || \i ==  0, 1, 0)}
                \ifnum \isTarget =1
                    \draw[white,line width=5pt] (U\nextval) -- (L\i);
                \fi
                \draw (U\nextval) -- (L\i);
            }
            \draw[white,line width=4pt] (U2) -- (L1);
            \draw (U2) -- (L1);
        \end{scope}
    \end{tikzpicture}
}

\newcommand{\truncatedtetraknott}{
    \tdplotsetmaincoords{70}{80} 
    \begin{tikzpicture}[scale=0.5]
        \begin{scope}[tdplot_main_coords] 
            \coordinate (A) at (2, 0, 0);
            \coordinate (B) at (-1, 1.732, 0);
            \coordinate (C) at (-1, -1.732, 0);
            \coordinate (D) at (0, 0, 3); 

            \coordinate (A2) at (6, 0, 0);
            \coordinate (B2) at (-3, 1.732*3, 0);
            \coordinate (C2) at (-3, -1.732*3, 0);
            \coordinate (D2) at (0, 0, 9);

            \pgfmathsetmacro{\truncfrac}{0.20} % Adjust this value as needed

            \coordinate (AB1) at ($(A)!\truncfrac!(B)+(0,-0.9,3)$);
            \coordinate (BA1) at ($(B)!\truncfrac!(A)+(0,-0.9,3)$);
            \coordinate (BD1) at ($(B)!\truncfrac!(D)+(0,-0.9,3)$);
            \coordinate (DB1) at ($(D)!\truncfrac!(B)+(0,-0.9,3)$);
            \coordinate (DA1) at ($(D)!\truncfrac!(A)+(0,-0.9,3)$);
            \coordinate (AD1) at ($(A)!\truncfrac!(D)+(0,-0.9,3)$);
            \coordinate (AC1) at ($(A)!\truncfrac!(C)+(0,-0.9,3)$);
            \coordinate (CA1) at ($(C)!\truncfrac!(A)+(0,-0.9,3)$);
            \coordinate (CD1) at ($(C)!\truncfrac!(D)+(0,-0.9,3)$);
            \coordinate (DC1) at ($(D)!\truncfrac!(C)+(0,-0.9,3)$);
            \coordinate (BC1) at ($(B)!\truncfrac!(C)+(0,-0.9,3)$);
            \coordinate (CB1) at ($(C)!\truncfrac!(B)+(0,-0.9,3)$);

            \coordinate (A2B2_1) at ($(A2)!\truncfrac!(B2)$);
            \coordinate (B2A2_1) at ($(B2)!\truncfrac!(A2)$);
            \coordinate (B2D2_1) at ($(B2)!\truncfrac!(D2)$);
            \coordinate (D2B2_1) at ($(D2)!\truncfrac!(B2)$);
            \coordinate (D2A2_1) at ($(D2)!\truncfrac!(A2)$);
            \coordinate (A2D2_1) at ($(A2)!\truncfrac!(D2)$);
            \coordinate (A2C2_1) at ($(A2)!\truncfrac!(C2)$);
            \coordinate (C2A2_1) at ($(C2)!\truncfrac!(A2)$);
            \coordinate (C2D2_1) at ($(C2)!\truncfrac!(D2)$);
            \coordinate (D2C2_1) at ($(D2)!\truncfrac!(C2)$);
            \coordinate (B2C2_1) at ($(B2)!\truncfrac!(C2)$);
            \coordinate (C2B2_1) at ($(C2)!\truncfrac!(B2)$);

            \draw (A2B2_1) -- (B2A2_1);
            \draw (B2D2_1) -- (D2B2_1);
            \draw (A2C2_1) -- (C2A2_1);
            \draw (C2D2_1) -- (D2C2_1);
            \draw (B2C2_1) -- (C2B2_1);

            \draw[white,line width=5pt] (AB1) -- (BA1);
            \draw[white,line width=5pt] (BD1) -- (DB1);
            \draw[white,line width=5pt] (AC1) -- (CA1);
            \draw[white,line width=5pt] (CD1) -- (DC1);
            \draw[white,line width=5pt] (BC1) -- (CB1);

            \draw (AB1) -- (BA1);
            \draw (BD1) -- (DB1);
            \draw (AC1) -- (CA1);
            \draw (CD1) -- (DC1);
            \draw (BC1) -- (CB1);

            \draw[white,line width=5pt] (DA1) -- (AD1);
            \draw (DA1) -- (AD1);

            \draw[dashed] (AD1) -- (AC1) -- (AB1) -- cycle;
            \draw[dashed] (BA1) -- (BC1) -- (BD1) -- cycle;
            \draw[dashed] (CA1) -- (CB1) -- (CD1) -- cycle;
            \draw[dashed] (DA1) -- (DB1) -- (DC1) -- cycle;

            \draw[white,line width=5pt] (D2A2_1) -- (A2D2_1);
            \draw (D2A2_1) -- (A2D2_1);

            \draw[dashed] (A2D2_1) -- (A2C2_1) -- (A2B2_1) -- cycle;
            \draw[dashed] (B2A2_1) -- (B2C2_1) -- (B2D2_1) -- cycle;
            \draw[dashed] (C2A2_1) -- (C2B2_1) -- (C2D2_1) -- cycle;
            \draw[dashed] (D2A2_1) -- (D2B2_1) -- (D2C2_1) -- cycle;

            \draw[dotted,thick] (AB1) -- (A2B2_1);
            \draw[dotted,thick] (BA1) -- (B2A2_1);
            \draw[dotted,thick] (BD1) -- (B2D2_1);
            \draw[dotted,thick] (DB1) -- (D2B2_1);
            \draw[dotted,thick] (DA1) -- (D2A2_1);
            \draw[dotted,thick] (AD1) -- (A2D2_1);
            \draw[dotted,thick] (AC1) -- (A2C2_1);
            \draw[dotted,thick] (CA1) -- (C2A2_1);
            \draw[dotted,thick] (CD1) -- (C2D2_1);
            \draw[dotted,thick] (DC1) -- (D2C2_1);
            \draw[dotted,thick] (BC1) -- (B2C2_1);
            \draw[dotted,thick] (CB1) -- (C2B2_1);

            \node at ($(A2)+(2,0,0)$) {\LARGE $n_1$};
            \node at ($(B2)+(0,0.5,0)$) {\LARGE $n_2$};
            \node at ($(C2)+(0,-0.5,0)$) {\LARGE $n_4$};
            \node at ($(D2)+(0,0,0.5)$) {\LARGE $n_3$};

        \end{scope}
    \end{tikzpicture}
}

\newcommand{\tetragraphwithv}{
    \begin{tikzpicture}[scale=1.5,rotate=180]
        \coordinate (A) at (0,0);
        \coordinate (B) at (0,1);
        \coordinate (C) at (1,-0.5);
        \coordinate (D) at (-1,-0.5);

        \draw (A)--(B)--(C)--(D)--(A)--(C)--(D)--(B);
        \fill (A) circle (1.2pt);
        \fill (B) circle (1.2pt);
        \fill (C) circle (1.2pt);
        \fill (D) circle (1.2pt);

        \node[rotate=180,transform shape] at (0.3,0.2) {\Large $v_3$};
        \node[rotate=180,transform shape] at (0,1.25) {\Large $v_1$};
        \node[rotate=180,transform shape] at (1.3,-0.6) {\Large $v_2$};
        \node[rotate=180,transform shape] at (-1.3,-0.6) {\Large $v_4$};
    \end{tikzpicture}
}

\newcommand{\twopointgraph}{
    \begin{tikzpicture}
        \coordinate (A) at (0,0);
        \coordinate (B) at (2,0);

        \draw (A) to[curve through = {(1,0.5)}] (B);
        \draw (A) to[curve through = {(1,0.25)}] (B);
        \node at (1,-0) {$\vdots$};
        \draw (A) to[curve through = {(1,-0.5)}] (B);

        \fill (A) circle (1.2pt);
        \fill (B) circle (1.2pt);
    \end{tikzpicture}
}

\newcommand{\threepretzeln}{
    \begin{tikzpicture}
        \pic[
        braid/.cd,
        number of strands=3,
        strand 1/.style={black},
        strand 2/.style={black},
        strand 3/.style={black},
        gap=0.1,
        scale=1
        ] {braid={s_1^{-1} s_2^{-1}}};
        \node at (1,-3) {\Huge $\vdots$};
        \begin{scope}[yshift=-4cm]
            \pic[
            braid/.cd,
            number of strands=3,
            strand 1/.style={black},
            strand 2/.style={black},
            strand 3/.style={black},
            gap=0.1,
            scale=1
            ] {braid={s_1^{-1} s_2^{-1}}};
        \end{scope}

        \begin{scope}[xshift=3cm]
            \pic[
            braid/.cd,
            number of strands=3,
            strand 1/.style={black},
            strand 2/.style={black},
            strand 3/.style={black},
            gap=0.1,
            scale=1
            ] {braid={s_2^{-1} s_1^{-1}}};
            \node at (1,-3) {\Huge $\vdots$};
            \begin{scope}[yshift=-4cm]
                \pic[
                braid/.cd,
                number of strands=3,
                strand 1/.style={black},
                strand 2/.style={black},
                strand 3/.style={black},
                gap=0.1,
                scale=1
                ] {braid={s_2^{-1} s_1^{-1}}};
            \end{scope}
        \end{scope}

        \begin{scope}[xshift=6cm]
            \pic[
            braid/.cd,
            number of strands=3,
            strand 1/.style={black},
            strand 2/.style={black},
            strand 3/.style={black},
            gap=0.1,
            scale=1
            ] {braid={s_1^{-1} s_2^{-1}}};
            \node at (1,-3) {\Huge $\vdots$};
            \begin{scope}[yshift=-4cm]
                \pic[
                braid/.cd,
                number of strands=3,
                strand 1/.style={black},
                strand 2/.style={black},
                strand 3/.style={black},
                gap=0.1,
                scale=1
                ] {braid={s_1^{-1} s_2^{-1}}};
            \end{scope}
        \end{scope}

        \begin{scope}[xshift=9cm]
            \pic[
            braid/.cd,
            number of strands=3,
            strand 1/.style={black},
            strand 2/.style={black},
            strand 3/.style={black},
            gap=0.1,
            scale=1
            ] {braid={s_2^{-1} s_1^{-1}}};
            \node at (1,-3) {\Huge $\vdots$};
            \begin{scope}[yshift=-4cm]
                \pic[
                braid/.cd,
                number of strands=3,
                strand 1/.style={black},
                strand 2/.style={black},
                strand 3/.style={black},
                gap=0.1,
                scale=1
                ] {braid={s_2^{-1} s_1^{-1}}};
            \end{scope}
        \end{scope}

        \foreach \i in {2,5,8} {
            \draw (\i,0) to[in=90,out=90] (\i+1,0);
        }

        \foreach \i in {2,5,8} {
            \draw (\i,-6.5) to[in=-90,out=-90] (\i+1,-6.5);
        }

        \draw (0,0) to[in=90,out=90] (11,0);
        \draw (0,-6.5) to[in=-90,out=-90] (11,-6.5);

        \draw (4,0) to[in=90,out=90] (10,0);

        \draw[white,line width=5pt] (1,0) to[in=90,out=90] (7,0);
        \draw (1,0) to[in=90,out=90] (7,0);

        \draw (4,-6.5) to[in=-90,out=-90] (10,-6.5);

        \draw[white,line width=5pt] (1,-6.5) to[in=-90,out=-90] (7,-6.5);
        \draw (1,-6.5) to[in=-90,out=-90] (7,-6.5);

    \end{tikzpicture}
}

\newcommand{\threepretzelmn}{
    \begin{tikzpicture}
        \pic[
        braid/.cd,
        number of strands=3,
        strand 1/.style={black},
        strand 2/.style={black},
        strand 3/.style={black},
        gap=0.1,
        scale=1
        ] {braid={s_2^{1} s_1^{1}}};
        \node at (1,-3) {\Huge $\vdots$};
        \begin{scope}[yshift=-4cm]
            \pic[
            braid/.cd,
            number of strands=3,
            strand 1/.style={black},
            strand 2/.style={black},
            strand 3/.style={black},
            gap=0.1,
            scale=1
            ] {braid={s_2^{1} s_1^{1}}};
        \end{scope}

        \begin{scope}[xshift=3cm]
            \pic[
            braid/.cd,
            number of strands=3,
            strand 1/.style={black},
            strand 2/.style={black},
            strand 3/.style={black},
            gap=0.1,
            scale=1
            ] {braid={s_1^{1} s_2^{1}}};
            \node at (1,-3) {\Huge $\vdots$};
            \begin{scope}[yshift=-4cm]
                \pic[
                braid/.cd,
                number of strands=3,
                strand 1/.style={black},
                strand 2/.style={black},
                strand 3/.style={black},
                gap=0.1,
                scale=1
                ] {braid={s_1^{1} s_2^{1}}};
            \end{scope}
        \end{scope}

        \begin{scope}[xshift=6cm]
            \pic[
            braid/.cd,
            number of strands=3,
            strand 1/.style={black},
            strand 2/.style={black},
            strand 3/.style={black},
            gap=0.1,
            scale=1
            ] {braid={s_2^{1} s_1^{1}}};
            \node at (1,-3) {\Huge $\vdots$};
            \begin{scope}[yshift=-4cm]
                \pic[
                braid/.cd,
                number of strands=3,
                strand 1/.style={black},
                strand 2/.style={black},
                strand 3/.style={black},
                gap=0.1,
                scale=1
                ] {braid={s_2^{1} s_1^{1}}};
            \end{scope}
        \end{scope}

        \begin{scope}[xshift=9cm]
            \pic[
            braid/.cd,
            number of strands=3,
            strand 1/.style={black},
            strand 2/.style={black},
            strand 3/.style={black},
            gap=0.1,
            scale=1
            ] {braid={s_1^{1} s_2^{1}}};
            \node at (1,-3) {\Huge $\vdots$};
            \begin{scope}[yshift=-4cm]
                \pic[
                braid/.cd,
                number of strands=3,
                strand 1/.style={black},
                strand 2/.style={black},
                strand 3/.style={black},
                gap=0.1,
                scale=1
                ] {braid={s_1^{1} s_2^{1}}};
            \end{scope}
        \end{scope}

        \foreach \i in {2,5,8} {
            \draw (\i,0) to[in=90,out=90] (\i+1,0);
        }

        \foreach \i in {2,5,8} {
            \draw (\i,-6.5) to[in=-90,out=-90] (\i+1,-6.5);
        }

        \draw (0,0) to[in=90,out=90] (11,0);
        \draw (0,-6.5) to[in=-90,out=-90] (11,-6.5);

        \draw (4,0) to[in=90,out=90] (10,0);

        \draw[white,line width=5pt] (1,0) to[in=90,out=90] (7,0);
        \draw (1,0) to[in=90,out=90] (7,0);

        \draw (4,-6.5) to[in=-90,out=-90] (10,-6.5);

        \draw[white,line width=5pt] (1,-6.5) to[in=-90,out=-90] (7,-6.5);
        \draw (1,-6.5) to[in=-90,out=-90] (7,-6.5);

    \end{tikzpicture}
}

\tikzset{
    threentoruso/.pic = {
        \foreach \i in {1,...,4} {
            \coordinate (A\i) at (0,\i);
        }

        \foreach \i in {1,...,4} {
            \coordinate (B\i) at ($(0,0)!1!-120:(A\i)$);
        }

        \foreach \i [evaluate=\i as \previ using int(\i - 1)] in {2,3,4} {
            \draw[->] (B\i) to[in=0,out=60] (A\previ);
        }

        \draw[white, line width=10pt] (B1) to[in=0,out=60] (A4);
        \draw[->] (B1) to[in=0,out=60] (A4);
    }
}

\tikzset{
    threetorusothree/.pic = {
        \foreach \i in {1,...,3} {
            \coordinate (A\i) at (0,\i);
        }

        \foreach \i in {1,...,3} {
            \coordinate (B\i) at ($(0,0)!1!-120:(A\i)$);
        }

        \foreach \i [evaluate=\i as \previ using int(\i - 1)] in {2,3} {
            \draw[->] (B\i) to[in=0,out=60] (A\previ);
        }

        \draw[white, line width=10pt] (B1) to[in=0,out=60] (A3);
        \draw[->] (B1) to[in=0,out=60] (A3);
    }
}

\tikzset{
    threentorus/.pic = {
        \foreach \i in {1,...,4} {
            \coordinate (A\i) at (0,\i);
        }

        \foreach \i in {1,...,4} {
            \coordinate (B\i) at ($(0,0)!1!-120:(A\i)$);
        }

        \foreach \i [evaluate=\i as \previ using int(\i - 1)] in {2,3,4} {
            \draw (B\i) to[in=0,out=60] (A\previ);
        }

        \draw[white, line width=10pt] (B1) to[in=0,out=60] (A4);
        \draw (B1) to[in=0,out=60] (A4);
    }
}

\tikzset{
    threetorusThree/.pic = {
        \foreach \i in {1,...,3} {
            \coordinate (A\i) at (0,\i);
        }

        \foreach \i in {1,...,3} {
            \coordinate (B\i) at ($(0,0)!1!-120:(A\i)$);
        }

        \foreach \i [evaluate=\i as \previ using int(\i - 1)] in {2,3} {
            \draw (B\i) to[in=0,out=60] (A\previ);
        }

        \draw[white, line width=10pt] (B1) to[in=0,out=60] (A3);
        \draw (B1) to[in=0,out=60] (A3);
    }
}

\tikzset{
    threentoruszero/.pic = {
        \foreach \i in {1,...,4} {
            \coordinate (A\i) at (0,\i);
        }

        \foreach \i in {1,...,4} {
            \coordinate (B\i) at ($(0,0)!1!-120:(A\i)$);
        }

        \foreach \i [evaluate=\i as \previ using int(\i - 1)] in {2,3} {
            \draw (B\i) to[in=0,out=60] (A\previ);
        }

        \draw[white, line width=10pt] (B1) to[in=0,out=60] (A3);
        \draw (B1) to[in=0,out=60] (A3);
        \draw (B4) to[in=0,out=60] (A4);
    }
}

\tikzset{
    threetoruszeroThree/.pic = {
        \foreach \i in {1,...,3} {
            \coordinate (A\i) at (0,\i);
        }

        \foreach \i in {1,...,3} {
            \coordinate (B\i) at ($(0,0)!1!-120:(A\i)$);
        }

        \foreach \i [evaluate=\i as \previ using int(\i - 1)] in {2} {
            \draw (B\i) to[in=0,out=60] (A\previ);
        }

        \draw[white, line width=10pt] (B1) to[in=0,out=60] (A2);
        \draw (B1) to[in=0,out=60] (A2);
        \draw (B3) to[in=0,out=60] (A3);

    }
}

\tikzset{
    threentorusinfty/.pic = {
        \foreach \i in {1,...,4} {
            \coordinate (A\i) at (0,\i);
        }

        \foreach \i in {1,...,4} {
            \coordinate (B\i) at ($(0,0)!1!-120:(A\i)$);
        }
 
        \coordinate (C3) at ($(0,0)!1!-30:(A3)$);
        \coordinate (C4) at ($(0,0)!1!-30:(A4)$);

        \foreach \i [evaluate=\i as \previ using int(\i - 1)] in {2,3} {
            \draw (B\i) to[in=0,out=60] (A\previ);
        }

        \draw[white, line width=10pt] (B1) to[in=0,out=60] (C3);
        \draw (B1) to[in=0,out=60] (C3);
        \draw (C3) to[in=150,out=150] (C4);
        \draw (A3) to[in=0,out=0] (A4);
        \draw (B4) to[in=-30,out=60] (C4);
    }
}

\tikzset{
    threetorusinftyThree/.pic = {
        \foreach \i in {1,...,3} {
            \coordinate (A\i) at (0,\i);
        }

        \foreach \i in {1,...,3} {
            \coordinate (B\i) at ($(0,0)!1!-120:(A\i)$);
        }

        \coordinate (C2) at ($(0,0)!1!-30:(A2)$);
        \coordinate (C3) at ($(0,0)!1!-30:(A3)$);

        \foreach \i [evaluate=\i as \previ using int(\i - 1)] in {2} {
            \draw (B\i) to[in=0,out=60] (A\previ);
        }

        \draw[white, line width=10pt] (B1) to[in=0,out=60] (C2);
        \draw (B1) to[in=0,out=60] (C2);
        \draw (C2) to[in=150,out=150] (C3);
        \draw (A2) to[in=0,out=0] (A3);
        \draw (B3) to[in=-30,out=60] (C3);
    }
}

\tikzset{
    threetorustwo/.pic = {
        \foreach \i in {1,...,2} {
            \coordinate (A\i) at (0,\i);
        }

        \foreach \i in {1,...,2} {
            \coordinate (B\i) at ($(0,0)!1!-120:(A\i)$);
        }

        \foreach \i [evaluate=\i as \previ using int(\i - 1)] in {2} {
            \draw (B\i) to[in=0,out=60] (A\previ);
        }

        \draw[white, line width=10pt] (B1) to[in=0,out=60] (A2);
        \draw (B1) to[in=0,out=60] (A2);
    }
}

\tikzset{
    threetoruszerotwo/.pic = {
        \foreach \i in {1,...,2} {
            \coordinate (A\i) at (0,\i);
        }

        \foreach \i in {1,...,2} {
            \coordinate (B\i) at ($(0,0)!1!-120:(A\i)$);
        }

        \draw (B1) to[in=0,out=60] (A1);
        \draw (B2) to[in=0,out=60] (A2);
    }
}

\tikzset{
    threetorusinftytwo/.pic = {
        \foreach \i in {1,...,2} {
            \coordinate (A\i) at (0,\i);
        }

        \foreach \i in {1,...,2} {
            \coordinate (B\i) at ($(0,0)!1!-120:(A\i)$);
        }

        \coordinate (C1) at ($(0,0)!1!-30:(A1)$);
        \coordinate (C2) at ($(0,0)!1!-30:(A2)$);

        \draw[white, line width=10pt] (B1) to[in=0,out=60] (C1);
        \draw (B1) to[in=0,out=60] (C1);
        \draw (C1) to[in=150,out=150] (C2);
        \draw (A1) to[in=0,out=0] (A2);
        \draw (B2) to[in=-30,out=60] (C2);
    }
}

\newcommand{\syll}{
    \tdplotsetmaincoords{70}{0}
    \begin{tikzpicture}[
        tdplot_main_coords,
        scale=1.5,
        line cap=round,
        line join=round]

        \def\R{1.0}        % 半径
        \def\H{3.0}        % 高さ
        \def\Turns{1}   % 回転数
        \pgfmathsetmacro{\MaxAngle}{360 * \Turns}
        
        \def\FixedSideAngle{0} 

        \draw[red!70, dashed, thick] plot[domain=0:360, samples=60] ({\R*cos(\x)}, {\R*sin(\x)}, 0);
        
        \draw[red!70, dashed, thick] plot[domain=0:360, samples=60] ({\R*cos(\x)}, {\R*sin(\x)}, \H);
        
        \draw[red!70, dashed, thick] ({\FixedSideAngle}:\R) -- ++(0,0,\H);
        \draw[red!70, dashed, thick] ({\FixedSideAngle+180}:\R) -- ++(0,0,\H);

        \def\strands{5}
        \foreach \k in {1,...,\strands} {
            \def\offset{\k * 360 / \strands}
            
            \draw[black, ultra thick, dashed] 
                plot[domain=0:\MaxAngle, samples=100, smooth, variable=\t] 
                ({\R*cos(\t + \offset)}, {\R*sin(\t + \offset)}, {\H * \t / \MaxAngle});

            \fill[blue!80!black] ({\R*cos(\offset)}, {\R*sin(\offset)}, 0) circle (1.5pt);
            \fill[blue!80!black] ({\R*cos(\MaxAngle + \offset)}, {\R*sin(\MaxAngle + \offset)}, \H) circle (1.5pt);
        }

        \def\strands{5}
        \foreach \k in {1,...,\strands} {
            \pgfmathsetmacro{\offset}{\k * 360 / \strands}
            
            \pgfmathsetmacro{\numCycles}{ceil(\MaxAngle / 360)}
            
            \foreach \n in {0,1,...,\numCycles} {
                \pgfmathsetmacro{\tStart}{max(180 - \offset + \n*360, 0)}
                \pgfmathsetmacro{\tEnd}{min(360 - \offset + \n*360, \MaxAngle)}
                
                \ifdim\tStart pt<\MaxAngle pt
                    \draw[black, ultra thick] 
                        plot[domain=\tStart:\tEnd, samples=50, smooth, variable=\t] 
                        ({\R*cos(\t + \offset)}, {\R*sin(\t + \offset)}, {\H * \t / \MaxAngle});
                \fi
            }
    }
    \end{tikzpicture}
}